\newtheorem{theorem}{Theorem}
\newtheorem{lemma}[theorem]{Lemma}
\newtheorem{corollary}[theorem]{Corollary}
\theoremstyle{definition}
\theoremstyle{remark}
\begin{document}
\title{Virtual Covers of Links}

\author{Micah Chrisman}
%\thanks{\emph{Affiliation.} Department of Mathematics, Monmouth University, West Long Branch, New Jersey, USA}

\begin{abstract} We use virtual knot theory to detect the non-invertibility of some classical links in $\mathbb{S}^3$.  These links appear in the study of virtual covers. Briefly, a virtual cover associates a virtual knot $\upsilon$ to a knot $K$ in a $3$-manifold $N$, under certain hypotheses on $K$ and $N$. Virtual covers of links in $\mathbb{S}^3$ come from taking $K$ to be in the complement $N$ of a fibered link $J$. If $J \sqcup K$ is invertible and $K$ is ``close to'' a fiber of $J$, then $\upsilon$ satisfies a symmetry condition to which some virtual knot polynomials are sensitive. We also discuss virtual covers of links $J \sqcup K$ where $J$ is not fibered but is virtually fibered (in the sense of W. Thurston).   
\end{abstract}
\keywords{virtual knots, non-invertible links, Vassiliev filtration, virtually fibered links}
\subjclass[2010]{57M25, 57M27}
\maketitle

It is well-known that quantum knot invariants cannot distinguish a knot in $\mathbb{S}^3$ from its inverse. For links, it is unknown if quantum invariants detect non-invertibility (see Duzhin-Karev \cite{dk}). The HOMFLYPT polynomial in particular does not detect it. Other methods for knots (e.g. Hartley \cite{hartley}) and links (e.g. Whitten \cite{whitten_sublinks}) can be used, but they are difficult. The situation for virtual knots is better: easily computable asymmetry sensitive virtual knot invariants exist. Morever, Manturov and the author recently \cite{cm_fiber} developed \emph{virtual covers}. This technique associates virtual knots to \emph{some} two component links. Here we exploit these facts to give simple proofs of non-invertibility for \emph{some} links in $\mathbb{S}^3$ (see Section \ref{sec_non}). 
\newline
\newline
To do this, we build on \cite{cm_fiber} and establish a condition on links in $\mathbb{S}^3$ so that the associated virtual knot functions essentially as a link invariant (see Section \ref{sec_main}). Besides detecting non-invertibility, the result provides an avenue for studying geometric properties of classical links with virtual knots. Further applications will be considered in Chrisman-Kaestner \cite{vc_2}. 
\newline
\newline
It is thus desirable to extend virtual covers from \emph{some} links to \emph{some more} links.  In \cite{cm_fiber}, two component links of the form $J \sqcup K$ with $J$ fibered and $\text{lk}(J,K)=0$ were considered. This generalizes easily to $m+1$ component links $J \sqcup K$ with $J$ a fibered $m$ component link and $K$ ``close to'' a fiber of $J$ (see Section \ref{sec_ssf}). Section \ref{sec_virt_fib} considers the case that $J$ is not necessarily fibered but that $\mathbb{S}^3 \backslash J$ has a finite-index cover by $\mathbb{S}^3\backslash \tilde{J}$, where $\tilde{J}$ is a fibered link. Such links $J$ are examples of \emph{virtually fibered links}, a class of links that conjecturally contains all hyperbolic links (see Walsh \cite{walsh}).
\section{Virtual Covers and Non-Invertibility of Links}
 
\subsection{Review: Virtual Knots} The four models of virtual knots are:
\begin{enumerate}
\item virtual knot diagrams modulo the extended Reidemeister moves (Kauffman \cite{KaV}),  
\item Gauss diagrams modulo diagram Reidemeister moves (Goussarov-Polyak-Viro \cite{GPV}), 
\item abstract knot diagrams modulo Kamada--Kamada equivalence \cite{kamkam}, and
\item knots in thickened surfaces modulo stabilization/destabilization (Kuperberg \cite{kuperberg}). 
\end{enumerate}
There is a one-to-one correspondence between the equivalence classes of each interpretation. If we have a representative $R$ of one of the models (2), (3), or (4), we will denote by $\kappa(R)$ the corresponding equivalence class of virtual knot diagrams from model (1).  The four models for a single virtual knot are shown below.

\begin{figure}[htb]
\fcolorbox{black}{white}{
$
\begin{array}{cccc}
\begin{array}{c}
\scalebox{.6}{\psfig{figure=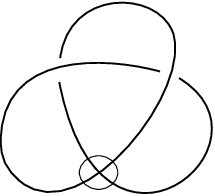}} \end{array} &  \begin{array}{c}
\scalebox{.6}{\psfig{figure=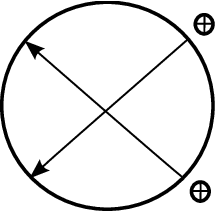}} \end{array} & 
\begin{array}{c}
\scalebox{.6}{\psfig{figure=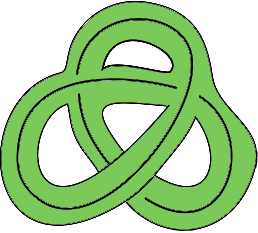}}
\end{array} &
\begin{array}{c}
\scalebox{.6}{\psfig{figure=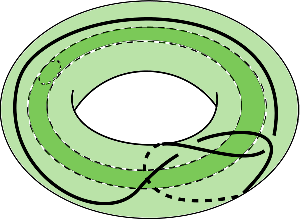}}
\end{array} \\
(1) & (2) & (3) & (4) \\ 
\end{array}
$}
\end{figure}

We use ``$\leftrightharpoons$'' to denote equivalence of virtual knots. If a virtual knot has a diagram whose crossings are all classical, then it is said to be a \emph{classical knot}. If two classical knots are equivalent as virtual knots, then they are equivalent as knots in $\mathbb{S}^3$ \cite{GPV}. A virtual knot is classical if and only if it can be represented as a diagram on a compact surface in $\mathbb{R}^2$.

\subsection{Virtual Covers} Let $M$ be a smooth connected oriented $3$-manifold and $K^M$ a smooth oriented knot in $\text{int}(M)$. Knots in $M$ are considered equivalent up to ambient isotopy in $M$. If $\partial M \ne \emptyset$, we assume that the isotopy acts as the identity on $\partial M$. We write $K_1^M \leftrightharpoons K_2^M$ if $K_1^M$ and $K_2^M$ are equivalent in $M$.
\newline
\newline
Let $N$ be a compact connected oriented (c.c.o) $3$-manifold having a regular orientation preserving covering space $\Pi:\Sigma \times \mathbb{R} \to N$, where $\Sigma$ is a c.c.o smooth $2$-manifold. If $K^N$ is a knot in $N$ and $\mathfrak{k}^{\Sigma \times \mathbb{R}}$ is a knot in $\Sigma \times \mathbb{R}$ satisfying $\Pi(\mathfrak{k})=K$, we call $\mathfrak{k}^{\Sigma \times \mathbb{R}}$ a \emph{lift by} $\Pi$ of $K^N$. A \emph{virtual cover} of $K^N$ is a choice $\mathfrak{k}^{\Sigma \times \mathbb{R}}$ of lift by $\Pi$ for $K^N$, denoted $(\mathfrak{k}^{\Sigma \times \mathbb{R}},\Pi,K^N)$. The oriented virtual knot $\upsilon=\kappa(\mathfrak{k}^{\Sigma \times \mathbb{R}})$ is called the \emph{associated virtual knot}.

\begin{figure}[htb]
\fcolorbox{black}{white}{
$\xymatrix{\begin{array}{c}
\scalebox{.4}{\psfig{figure=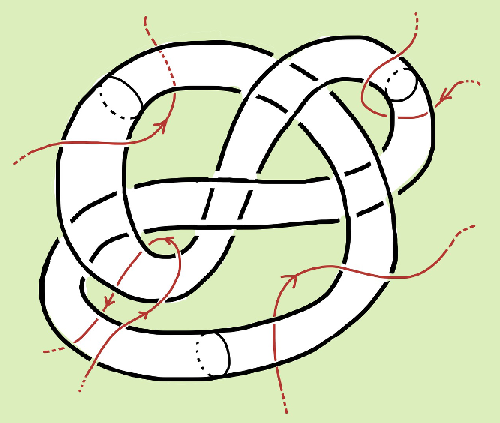}} \end{array} & \ar[l]^{\Pi} \ar@{-->}[r]^{\kappa(\mathfrak{k})=\upsilon} \begin{array}{c}
\scalebox{.45}{\psfig{figure=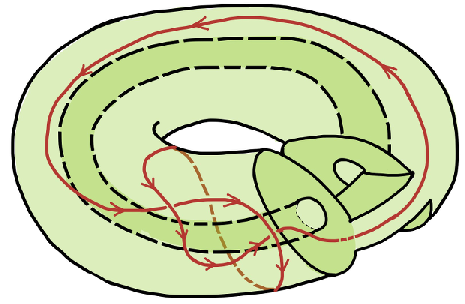}} \end{array} & \begin{array}{c}
\scalebox{.5}{\psfig{figure=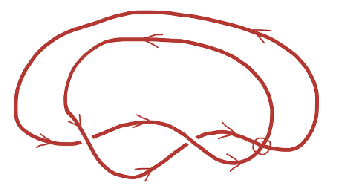}}
\end{array} 
}
$}
\end{figure}

Viewing $K$ as a closed path at a point $x_0$ in $N$, regularity implies there is a one-to-one correspondence between lifts by $\Pi$ and elements of $\Pi^{-1}(x_0)$. If $K^N$ has a lift by $\Pi$, let $\Upsilon(K^N)$ be the set of associated virtual knots for all lifts by $\Pi$ of $K^N$. If $|\Upsilon(K^N)|=1$, i.e. all the associated virtual knots are equivalent, then the unique element $\upsilon$ is called the \emph{invariant associated virtual knot}. In our notation, the main results of \cite{cm_fiber} can be restated compactly as follows (see also Krasnov-Manturov \cite{km_fiber2}).

\begin{lemma} \label{lemma_cm_general} Let $(\mathfrak{k}_0^{\Sigma \times \mathbb{R}},\Pi,K_0^N),(\mathfrak{k}_1^{\Sigma \times \mathbb{R}},\Pi,K_1^N)$ be virtual covers with invariant associated virtual knots $\upsilon_0,\upsilon_1$, respectively. If $K_0^N \leftrightharpoons K_1^N$, then $\upsilon_0 \leftrightharpoons \upsilon_1$.
\end{lemma}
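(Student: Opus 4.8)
The plan is to lift the ambient isotopy in $N$ that realizes $K_0^N \leftrightharpoons K_1^N$ to an ambient isotopy in $\Sigma \times \mathbb{R}$, and then to use the invariance hypothesis to identify the endpoint of the lifted isotopy with the chosen lift of $K_1^N$. First I would fix a smooth ambient isotopy $H\colon N \times [0,1] \to N$ with $H_0 = \mathrm{id}_N$, each $H_t$ a diffeomorphism (acting as the identity on $\partial N$ when $\partial N \ne \emptyset$), and $H_1(K_0) = K_1$ as oriented knots. Then I would form the smooth map $G\colon (\Sigma \times \mathbb{R}) \times [0,1] \to N$ given by $G(x,t) = H_t(\Pi(x))$, noting that $G(\cdot,0) = \Pi = \Pi \circ \mathrm{id}$. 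The homotopy lifting property of the covering $\Pi$ produces a unique continuous lift $\tilde{G}\colon (\Sigma \times \mathbb{R}) \times [0,1] \to \Sigma \times \mathbb{R}$ with $\tilde{G}(\cdot,0) = \mathrm{id}$ and $\Pi \circ \tilde{G} = G$; smoothness of $\tilde{G}$ follows because $\Pi$ is a local diffeomorphism and $G$ is smooth.

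The crucial point is to verify that each time slice $\tilde{G}_t := \tilde{G}(\cdot,t)$ is a diffeomorphism, so that $\tilde{G}$ is a genuine ambient isotopy of $\Sigma \times \mathbb{R}$ and not merely a homotopy. I would lift the reverse family $(x,t) \mapsto H_t^{-1}(\Pi(x))$ to a map $\tilde{G}'$ with $\tilde{G}'_0 = \mathrm{id}$, and then observe that both $\tilde{G}_t \circ \tilde{G}'_t$ and $\tilde{G}'_t \circ \tilde{G}_t$ cover the constant homotopy $\Pi$ and start at the identity; uniqueness of lifts then forces each to equal $\mathrm{id}$ for all $t$, so $\tilde{G}_t$ is a diffeomorphism with inverse $\tilde{G}'_t$. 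This is the step I expect to require the most care, since $\Sigma \times \mathbb{R}$ is noncompact and one must ensure the lift is defined globally and smoothly for all $t$ (and respects the boundary); the covering-space uniqueness argument is exactly what makes it go through.

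With the ambient isotopy $\tilde{G}_t$ in hand, $t \mapsto \tilde{G}_t(\mathfrak{k}_0)$ is an isotopy in $\Sigma \times \mathbb{R}$ from $\mathfrak{k}_0$ to $\tilde{G}_1(\mathfrak{k}_0)$, and since $\Pi \circ \tilde{G}_1 = H_1 \circ \Pi$ the knot $\tilde{G}_1(\mathfrak{k}_0)$ is a lift by $\Pi$ of $H_1(K_0) = K_1$. Because isotopic knots in the thickened surface $\Sigma \times \mathbb{R}$ represent the same virtual knot (model (4)), this gives $\upsilon_0 = \kappa(\mathfrak{k}_0) \leftrightharpoons \kappa(\tilde{G}_1(\mathfrak{k}_0))$. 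Finally, since $K_1^N$ has an invariant associated virtual knot, every lift of $K_1^N$ yields $\upsilon_1$, so in particular $\kappa(\tilde{G}_1(\mathfrak{k}_0)) \leftrightharpoons \upsilon_1$. Chaining these equivalences yields $\upsilon_0 \leftrightharpoons \upsilon_1$. I note that the invariance hypothesis on $K_0^N$ is not strictly needed for this direction; it is the invariance for $K_1^N$ that bridges the gap between the lift $\tilde{G}_1(\mathfrak{k}_0)$ produced by the isotopy and the chosen lift $\mathfrak{k}_1$, and stating both makes the conclusion symmetric in the two covers.
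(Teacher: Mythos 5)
Your proof is correct and is essentially the argument the paper intends: the paper's own proof is just a citation to the proof of Lemma 3 of \cite{cm_fiber}, whose content is precisely this lifting of the ambient isotopy of $N$ through the covering $\Pi$ to an ambient isotopy of $\Sigma \times \mathbb{R}$, combined with the invariance hypothesis to match the resulting lift of $K_1$ with $\upsilon_1$. You have simply supplied in full the covering-space details (uniqueness of lifts, invertibility of the time slices $\tilde{G}_t$, and the boundary condition) that the citation leaves implicit.
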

\begin{proof} This follows from the definitions and the proof of Lemma 3 from \cite{cm_fiber}.
\end{proof}

\subsection{Interlude on Fibered Links} \label{section_interlude} Let $V(X)$ denote some sufficiently small tubular neighborhood of $X$. For a link $J$ in $\mathbb{S}^3$, let, $N_J=\overline{\mathbb{S}^3 \backslash V(J)}$. By a fibered link, we will mean an oriented link in $\mathbb{S}^3$ having a c.c.o Seifert surface $\Sigma_J$ such that the pair  $(\overline{\mathbb{S}^3\backslash V(\Sigma_J)},\overline{\mathbb{S}^3\backslash V(\Sigma_J)} \cap \partial N_J)$ is diffeomorphic as a pair to $(\Sigma_J \cap N_J, \partial (\Sigma_J \cap N_J)) \times \mathbb{I}$ (Kawauchi \cite{kawauchi}). The surface $\Sigma_J$, called a \emph{fiber}, must be oriented so that the induced orientation on the boundary is the orientation of $J$. Note we may identify $N_J$ with a mapping torus $(\Sigma_J \cap N_J)\times \mathbb{I}/\psi$, with $\psi: \Sigma_J \cap N_J \to \Sigma_J \cap N_J$ an orientation preserving (o.p.) diffeomorphism. This defines a regular covering space $\Pi_J: (\Sigma_J \cap N_J) \times \mathbb{R} \to N_J$. \footnote{Henceforth, we will frequently abuse notation and fail to distinguish between $\Sigma_J$ and $\Sigma_J \cap N_J$.} 
\newline
\newline
The Seifert surface $\Sigma_J$ is of minimal genus and incompressible \cite{kawauchi}. It is also unique in the sense that for any two minimal genus c.c.o Seifert surfaces $\Sigma_1,\Sigma_2$ of $J$, there is an ambient isotopy of $\mathbb{S}^3$ fixing $J$ and taking $\Sigma_1$ to $\Sigma_2$ (see Kobayshi \cite{kobayashi_minimal} and Whitten \cite{whitten}). This is a special property of fibered links and is not always the case for an arbitrarily chosen link.
\newline
\newline
If two fibered links have known fibers, then a new fibered link can be constructed using a generalization of the boundary connect sum called \emph{Murasugi's sums} \cite{gabai_nat}. The resulting link is fibered and the constructed surface is a fiber.  Other techniques for finding fibers $\Sigma_J$ are described in \cite{stallings}. More recently, there is also \emph{Futer's method} \cite{futer}. We will take the literature on the topic as a calculus we employ to find fibers. To manipulate fibers, draw them \emph{disc-band form} (Burde-Zieschang \cite{bz}) and isotope using \emph{topological script} (Kauffman \cite{on_knots}).

\subsection{Examples of Virtual Covers} \label{sec_ssf} Consider an $m+1$ component oriented link $L=J \sqcup K$ in $\mathbb{S}^3$, where $J$ is a fibered $m$ component link and $K$ is a knot in the complement $N_J$. The components of $L$ are ordered, where $K$ is the ``last'' component. As  above, $\Sigma_J$ is an oriented c.c.o Seifert surface for $J$. It is two-sided in $\mathbb{S}^3$, denoted red/blue in figures. The blue side is always chosen so that resting a thumb-up right hand on it reproduces the orientation of the surface and the right-handed orientation of $\mathbb{S}^3$.
\newline
\newline
If $K$ lies in a small neighborhood of $\Sigma_J$, a lift by $\Pi_J$ is guaranteed. More precisely, we say that $K^{N_J}$ is in \emph{special Seifert form (SSF) relative to} $\Sigma_J$ if $K^{N_J}$ can be decomposed into a finite number of disjoint arcs in $\Sigma_J$ and a finite number of ``crossings'' in small disjoint embedded $3$-balls $B_i \subset \text{int}(N_J)$: each $B_i \cap \Sigma_J$ is an embedded disc dividing $B_i$ into halves, so that $K \cap B_i$ consists of two disjoint arcs in different halves of $\partial B_i$ configured as in the model on the left side of Figure \ref{fig_virt_cov_ex} (see also \cite{cm_fiber}). As $\Pi_J$ is a local diffeomorphism and $\Pi_J^{-1}(\Sigma_J) \approx \Sigma_J \times \mathbb{Z}$,  any $K^{N_J}$ in SSF relative to $\Sigma_J$ has a lift by $\Pi_J$. Hence we have virtual covers of the form $(\mathfrak{k}^{\Sigma_J \times \mathbb{R}},\Pi_J,K^{N_J})$.  
\newline

\begin{figure}[htb]
\begin{tabular}{|c|cc|} \hline & & \\
\begin{tabular}{c}
\def\svgwidth{2.5in}
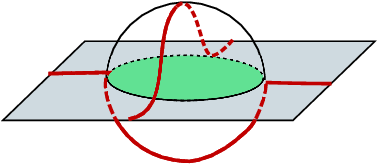 \end{tabular} 
 &
\begin{tabular}{c}
\def\svgwidth{1in}
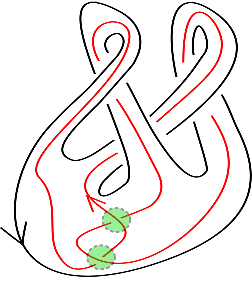 \end{tabular} & \begin{tabular}{c} \def\svgwidth{.9in}
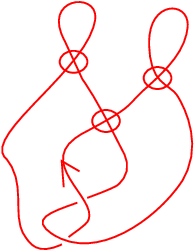 \end{tabular} \\ \hline
\end{tabular}
\caption{(Left) A ``crossing'' in special Seifert form. (Right) A knot $K$ in SSF wrt a left handed trefoil fiber and the invariant associated virtual knot $\upsilon$.} \label{fig_virt_cov_ex}
\end{figure}

If $K^{N_J}$ is in SSF relative to $\Sigma_J$, it has an invariant associated virtual knot. Note that $\Sigma_J$ is a framed neat submanifold of $N_J$ (Kosinski \cite{kosinski}). Orient the normal bundle by choosing a normal vector $\vec{v}$ at each point of $\Sigma_J$ so that the surface orientation together with $\vec{v}$ gives the right-handed orientation of $\mathbb{S}^3$. Assume the SSF lies in this oriented tubular neighborhood $\Sigma_J \times (-1,1)$ where $\Sigma_J\equiv\Sigma_J \times \{0\}$. An arc in a ball of the SSF is \emph{overcrossing} (resp. \emph{undercrossing}) if it lies in $\Sigma_J \times [0,1)$ (resp. $\Sigma_J \times (-1,0])$. The SSF thus defines a knot diagram $[K;\Sigma_J]$ on $\Sigma_J$. By \cite{cm_fiber}, the invariant associated virtual is $\kappa([K;\Sigma_J])$. See Figure \ref{fig_virt_cov_ex}.

For an example of virtual covers of knots $K^N$ where $N$ is closed, take $L=J \sqcup K$ where $J$ is a fibered knot in $\mathbb{S}^3$ and $K$ is in SSF relative to $\Sigma_J$. Let $N$ be the manifold obtained by performing the $0$-surgery on $J$. Then $N$ is a fibered $3$-manifold. A fiber can be obtained by gluing a disc to $\partial \Sigma_J \approx \mathbb{S}^1$. Hence $K^N$ has an invariant associated virtual knot as above.

\subsection{The Main Theorem} \label{sec_main} We show that the associated virtual knot can be used essentially as a link invariant. It is not an invariant in the traditional sense, but it is an invariant in the sense that if two equivalent links of the form $L=J \sqcup K$ as above have associated virtual knots, then the associated virtual knots must be equivalent. First, a lemma is needed.

\begin{lemma} \label{lemma_inheritance} If $K$ is in SSF with respect to $\Sigma_J$ and $H:\mathbb{S}^3 \to \mathbb{S}^3$ is an o.p. diffeomorphism, then $H(K)$ is in SSF with respect to $H(\Sigma_J)$ and $\kappa([K;\Sigma_J])\leftrightharpoons\kappa([H(K);H(\Sigma_J)])$. 
\end{lemma}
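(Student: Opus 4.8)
The plan is to transport the entire SSF decomposition of $K$ through the diffeomorphism $H$ and then to check that the combinatorial data defining the diagram $[K;\Sigma_J]$ — the cyclic order of the crossings, the over/under assignment, and the signs — is carried intact to that of $[H(K);H(\Sigma_J)]$. The only place where anything can go wrong is the over/under assignment, which is pinned down by an orientation convention; this is where the hypothesis that $H$ is orientation-preserving does all the work.

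First I would establish the SSF claim. Since $J$ is fibered with fiber $\Sigma_J$, and fiberedness together with its mapping-torus structure is manifestly preserved by an o.p. diffeomorphism of $\mathbb{S}^3$, the link $H(J)$ is fibered with fiber $H(\Sigma_J)$, and $H(K)$ is a knot in $N_{H(J)}=H(N_J)$. The SSF decomposition of $K$ is a finite collection of disjoint arcs in $\Sigma_J$ together with crossing balls $B_i$, each meeting $\Sigma_J$ in a disc that splits $B_i$ into halves with $K\cap B_i$ a standard crossing. Applying $H$ sends arcs in $\Sigma_J$ to arcs in $H(\Sigma_J)$, sends each $B_i$ to an embedded ball $H(B_i)$, and sends $B_i\cap\Sigma_J$ to the disc $H(B_i)\cap H(\Sigma_J)$ splitting $H(B_i)$ into halves; the local crossing configuration inside a ball is a property invariant under any diffeomorphism. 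Hence $H(K)$ is in SSF with respect to $H(\Sigma_J)$.

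Next I would compare the two diagrams. The restriction $h:=H|_{\Sigma_J}\colon\Sigma_J\to H(\Sigma_J)$ is a diffeomorphism of surfaces carrying the underlying immersed curve of $[K;\Sigma_J]$ to that of $[H(K);H(\Sigma_J)]$, and since $H$ preserves the orientation of $K$, it preserves the cyclic order of the crossings. It remains to match the over/under data. Recall that over- versus undercrossing is decided by the positive normal field $\vec{v}$, chosen so that the orientation of $\Sigma_J$ followed by $\vec{v}$ yields the right-handed orientation of $\mathbb{S}^3$. Because $H$ is orientation-preserving on $\mathbb{S}^3$ and carries the orientation of $\Sigma_J$ to that of $H(\Sigma_J)$, the pushforward $dH(\vec{v})$ is exactly the positive normal field for $H(\Sigma_J)$ under the same convention. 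Consequently an arc lying on the $+\vec{v}$ side of $\Sigma_J$ (an overcrossing) is sent to an arc on the $+dH(\vec{v})$ side of $H(\Sigma_J)$ (again an overcrossing), and likewise for undercrossings. Thus $[H(K);H(\Sigma_J)]$ is obtained from $[K;\Sigma_J]$ by applying the orientation-preserving surface diffeomorphism $h$, with all crossing decorations preserved.

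Finally, I would conclude. Two knot diagrams on surfaces that differ by an orientation-preserving diffeomorphism of the surface, matching orientations and over/under information, have identical Gauss diagrams (same chords, same cyclic order, same signs) and hence represent the same equivalence class of virtual knots; equivalently, $h\times\mathrm{id}$ is an o.p. diffeomorphism of the thickened surfaces carrying one knot to the other. Either way $\kappa([K;\Sigma_J])\leftrightharpoons\kappa([H(K);H(\Sigma_J)])$. I expect the main obstacle to be purely the careful bookkeeping in the preceding paragraph: one must verify that the positive normal and the crossing signs survive $H$, and it is precisely here that orientation-preservation is indispensable — an orientation-reversing $H$ would interchange over- and undercrossings and return the mirror of the associated virtual knot rather than the virtual knot itself.
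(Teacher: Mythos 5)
Your proposal is correct and follows essentially the same route as the paper's proof: transport the SSF data (arcs, crossing balls, discs) through $H$, use orientation-preservation to see that the canonical framing of $\Sigma_J$ is carried to that of $H(\Sigma_J)$ so overcrossing arcs map to overcrossing arcs, and conclude that the Gauss diagrams of $[K;\Sigma_J]$ and $[H(K);H(\Sigma_J)]$ coincide. Your write-up simply makes explicit the bookkeeping (normal field, cyclic order, crossing signs) that the paper's terser argument leaves implicit.
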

\begin{proof} An SSF is defined by embedded $3$-balls $B_i$, discs on $\Sigma_J$, arcs on $\Sigma_J$, and ``crossings'' in balls $B_i$. These are mapped by $H$ to $3$-balls, disc/arcs on $H(\Sigma_J)$ and ``crossings'' in balls $H(B_i)$.  $H(\Sigma_J)$ inherits its orientation from $\Sigma_J$ and is a canonically oriented framed neat submanifold of $\overline{\mathbb{S}^3\backslash V(\partial H(\Sigma_J))}$. $H$ is o.p., so the overcrossing arcs of the SSF for $K$ are mapped to the canonically chosen overcrossing arcs for $H(K)$ in SSF with respect to $H(\Sigma_J)$. Observe that the Gauss diagrams of $[K;\Sigma_J]$ and $[H(K);H(\Sigma_J)]$ are equivalent. 
\end{proof}

\begin{theorem}[Main Theorem] \label{theorem_link_invar} For $i=1,2$, let $L_i=J_i \sqcup K_i$ be oriented links with $J_i$ a fibered link, $K_i$ in SSF with respect to a fiber $\Sigma_i$ of $J_i$, and $\upsilon_i$ the invariant associated virtual knot. If $L_1 \leftrightharpoons L_2$, then $\upsilon_1 \leftrightharpoons \upsilon_2$.
\end{theorem}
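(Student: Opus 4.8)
The plan is to realize the hypothesized equivalence $L_1 \leftrightharpoons L_2$ as an orientation-preserving diffeomorphism of $\mathbb{S}^3$, transport the first virtual cover onto the complement of $J_2$ via Lemma \ref{lemma_inheritance}, and then correct for the resulting change of fiber using the uniqueness of fiber surfaces together with Lemma \ref{lemma_cm_general}. Recall first from Section \ref{sec_ssf} that $\upsilon_i = \kappa([K_i;\Sigma_i])$. An ambient isotopy carrying the ordered oriented link $L_1$ to $L_2$ yields an o.p. diffeomorphism $H\colon \mathbb{S}^3 \to \mathbb{S}^3$ with $H(J_1)=J_2$ and $H(K_1)=K_2$, respecting orientations and the component ordering. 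Since $H$ carries the mapping-torus structure on $N_{J_1}$ to one on $N_{J_2}$, the image $H(\Sigma_1)$ is a fiber of $J_2$, and by Lemma \ref{lemma_inheritance} the knot $K_2=H(K_1)$ is in SSF with respect to $H(\Sigma_1)$ with $\upsilon_1=\kappa([K_1;\Sigma_1]) \leftrightharpoons \kappa([K_2;H(\Sigma_1)])$.

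It then remains to compare $\kappa([K_2;H(\Sigma_1)])$ with $\upsilon_2=\kappa([K_2;\Sigma_2])$: these are invariant associated virtual knots of the same knot $K_2 \subset N_{J_2}$, but taken relative to two different fibers of $J_2$. Since $H(\Sigma_1)$ and $\Sigma_2$ are both minimal genus c.c.o Seifert surfaces of the fibered link $J_2$, the uniqueness recalled in Section \ref{section_interlude} provides an ambient isotopy of $\mathbb{S}^3$ fixing $J_2$ whose time-one map $G$ satisfies $G(H(\Sigma_1))=\Sigma_2$. Applying Lemma \ref{lemma_inheritance} to $G$ shows $G(K_2)$ is in SSF with respect to $\Sigma_2$ and $\kappa([K_2;H(\Sigma_1)]) \leftrightharpoons \kappa([G(K_2);\Sigma_2])$. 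On the other hand, $G$ is isotopic to the identity through maps fixing $J_2$, so it restricts to a diffeomorphism of $N_{J_2}$ isotopic to the identity; hence $G(K_2) \leftrightharpoons K_2$ in $N_{J_2}$. As both $K_2$ and $G(K_2)$ are in SSF with respect to $\Sigma_2$, they have invariant associated virtual knots for the single covering $\Pi_{\Sigma_2}$, and Lemma \ref{lemma_cm_general} gives $\kappa([G(K_2);\Sigma_2]) \leftrightharpoons \kappa([K_2;\Sigma_2])$. Chaining these,
\[
\upsilon_1=\kappa([K_1;\Sigma_1]) \leftrightharpoons \kappa([K_2;H(\Sigma_1)]) \leftrightharpoons \kappa([G(K_2);\Sigma_2]) \leftrightharpoons \kappa([K_2;\Sigma_2])=\upsilon_2.
\]

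The main obstacle is the middle step, where the transported fiber $H(\Sigma_1)$ need not coincide with the chosen fiber $\Sigma_2$. The two available lemmas pull in different directions: Lemma \ref{lemma_inheritance} compares associated virtual knots under a global diffeomorphism of $\mathbb{S}^3$ that may alter the fiber and hence the covering map, whereas Lemma \ref{lemma_cm_general} compares them only for isotopic knots sitting over one fixed covering $\Pi$. Reconciling them is exactly what the fiber-uniqueness property of fibered links buys us: it produces the isotopy $G$ moving $H(\Sigma_1)$ to $\Sigma_2$ while keeping $J_2$, and therefore the ambient covering $\Pi_{\Sigma_2}$, fixed, so that the change of representative from $K_2$ to $G(K_2)$ lives inside a single $N_{J_2}$ and is visible to Lemma \ref{lemma_cm_general}. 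The remaining care is bookkeeping: checking that $H$ and $G$ preserve the boundary orientation of the fibers, so that the canonical over/under\-crossing conventions defining $[\,\cdot\,;\,\cdot\,]$ are respected as in Lemma \ref{lemma_inheritance}, and that $G$ may be taken to fix a tubular neighborhood of $J_2$ so that it genuinely restricts to $N_{J_2}$. Neither presents a real difficulty, but both should be verified to apply the two lemmas cleanly.
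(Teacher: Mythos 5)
Your proof is correct and is essentially the paper's proof run in the mirror direction: the paper pulls $\Sigma_2$ back to $H^{-1}(\Sigma_2)$ on the $J_1$ side rather than pushing $\Sigma_1$ forward to the $J_2$ side, but the skeleton---Lemma \ref{lemma_inheritance} applied to the global diffeomorphism, fiber uniqueness to produce the correcting isotopy $G$, Lemma \ref{lemma_inheritance} again for $G$, then Lemma \ref{lemma_cm_general} on the resulting knot equivalence in the link complement---is identical. Regarding the technical point you flagged at the end, the paper does not arrange for $G$ to fix a tubular neighborhood of $J$; it instead observes that the compact trace of the knot isotopy $G_t(K_1)$ avoids $J_1$ pointwise, hence lies in $\text{int}(N_{J_1})$ for a sufficiently small $V(J_1)$, and then invokes the isotopy extension theorem within $N_{J_1}$, which discharges the verification more cleanly than strengthening $G$ itself.
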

\begin{proof} By hypothesis, there is an o.p. diffeomorphism $H:\mathbb{S}^3 \to \mathbb{S}^3$ taking $J_1$ to $J_2$ and $K_1$ to $K_2$. Let $\Sigma_2'=H^{-1}(\Sigma_2)$. This is a minimal genus c.c.o Seifert surface for $J_1$. Since $J_1$ is a fibered link, its minimal genus Seifert surfaces are unique. Hence, there is an ambient isotopy $G$ of $\mathbb{S}^3$ taking $\Sigma_2'$ to $\Sigma_1$ and fixing $J_1$ pointwise. In particular, the inherited orientation on $\Sigma_2'$ maps to the orientation of $\Sigma_1$. By Lemma \ref{lemma_inheritance}, $K_1=H^{-1}(K_2)$ is in SSF with respect to $\Sigma_2'=H^{-1}(\Sigma_2)$ and $\kappa ([K_1;\Sigma_2']) \leftrightharpoons \upsilon_2$. Note also that $G$ takes $K_1$ to a knot $K_1'$ in SSF with respect to $\Sigma_1$. Again by Lemma \ref{lemma_inheritance}, it follows that $\upsilon_2 \leftrightharpoons \kappa([K_1';\Sigma_1])$.
\newline
\newline
The ambient isotopy $G$ is also an isotopy of the embeddings of circles $K_1$ and $K_1'$. Since $K_1$ and $K_1'$ are in SSF with respect to $\Sigma_2'$ and $\Sigma_1$, respectively, each level of the isotopy must lie in the interior of $N_{J_1}$. Hence, by the isotopy extension theorem \cite{kosinski}, there is an ambient isotopy of $N_{J_1}$ taking $(K_1)^{N_{J_1}}$ to $(K_1')^{N_{J_1}}$. Thus there are virtual covers $(\mathfrak{k}_1^{\Sigma_1 \times \mathbb{R}},\Pi,K_1^{N_{J_1}})$ and $((\mathfrak{k}_1')^{\Sigma_1 \times \mathbb{R}},\Pi,(K_1')^{N_{J_1}})$ and $(K_1)^{N_{J_1}}\leftrightharpoons (K_1')^{N_{J_1}}$. By Lemma \ref{lemma_cm_general}, it follows that $\upsilon_1 \leftrightharpoons \kappa([K_1;\Sigma_1]) \leftrightharpoons \kappa([K_1';\Sigma_1]) \leftrightharpoons \upsilon_2$.
\end{proof}

An immediate corollary is the following strengthening of Theorem 2 from \cite{cm_fiber}. It will be used to show that some links are non-split.

\begin{corollary}\label{cor_split} Let $L=J \sqcup K$ with $J$ a fibered link, $K$ in SSF with respect to a fiber $\Sigma_J$ of $J$, and $\upsilon$ the invariant associated virtual knot. If there is an embedded $3$-ball $B$ in $\mathbb{S}^3$ such that $K \subseteq \text{int}(B)$ and $J \subseteq \mathbb{S}^3 \backslash B$ (so that $L$ is split), then $\upsilon \leftrightharpoons K$.
\end{corollary}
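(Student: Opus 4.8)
The plan is to exploit the fact that a split $L$ forces the lift of $K$ to sit inside a single contractible sheet of the covering $\Pi_J$, from which the associated virtual knot is visibly the classical knot $K$. First I would fix a convenient geometric setup. Since $L = J \sqcup K$ is split by the ball $B$ with $K \subseteq \text{int}(B)$ and $J \subseteq \mathbb{S}^3 \backslash B$, I may shrink the tubular neighborhood $V(J)$ so that $B$ is disjoint from $V(J)$ and $B \subset \text{int}(N_J)$; thus $K \subseteq \text{int}(B) \subset \text{int}(N_J)$. Because $B$ is a $3$-ball it is simply connected, so the regular covering $\Pi_J\colon \Sigma_J \times \mathbb{R} \to N_J$ restricts to a trivial covering over $B$: one has $\Pi_J^{-1}(B) = \bigsqcup_i \tilde{B}_i$ with each $\Pi_J|_{\tilde{B}_i}\colon \tilde{B}_i \to B$ an orientation-preserving diffeomorphism.

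Next I would locate the lift. A lift $\mathfrak{k}$ of $K$ is a connected curve with $\Pi_J(\mathfrak{k}) = K \subseteq B$, so connectedness forces $\mathfrak{k}$ into a single component $\tilde{B} = \tilde{B}_i$. Hence $\mathfrak{k}$ is an embedded knot contained in the $3$-ball $\tilde{B} \subset \Sigma_J \times \mathbb{R}$, and $\Pi_J|_{\tilde{B}}$ carries $\mathfrak{k}$ diffeomorphically onto $K \subseteq B \subset \mathbb{S}^3$, so $\mathfrak{k}$ realizes exactly the classical knot type of $K$. Since $\upsilon = \kappa(\mathfrak{k})$ by definition, and by the SSF description $\upsilon \leftrightharpoons \kappa([K;\Sigma_J])$, it remains only to show that a knot contained in a $3$-ball of a thickened surface has classical associated virtual knot.

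For that last step I would invoke uniqueness of smoothly embedded $3$-balls: any such ball in the interior of the connected oriented $3$-manifold $\Sigma_J \times \mathbb{R}$ is orientation-preservingly ambient isotopic to an arbitrarily small standard ball, which may be taken inside $D \times \mathbb{R}$ for a disc $D \subset \Sigma_J$. This ambient isotopy carries $\mathfrak{k}$ to a knot supported in $D \times \mathbb{R}$, a contractible (planar) region of the surface, so its diagram destabilizes to a diagram on a disc and is therefore classical; since isotopy in $\Sigma_J \times \mathbb{R}$ and destabilization both preserve the virtual knot class, $\kappa(\mathfrak{k})$ is the classical knot $K$. Assembling the chain gives $\upsilon \leftrightharpoons \kappa([K;\Sigma_J]) \leftrightharpoons \kappa(\mathfrak{k}) \leftrightharpoons K$.

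Alternatively, one can route this through Theorem \ref{theorem_link_invar}: it says $\upsilon$ depends only on the equivalence class of $L$, so it suffices to compute $\upsilon$ on a single split representative. Placing all of $K$ (in SSF) inside a disc $D \subset \Sigma_J$ far from $J$ makes the diagram $[K;\Sigma_J]$ supported in $D$, hence classical and equal to $K$. I expect the only genuine subtlety to be this reduction to disc-support --- equivalently, the honest use of the uniqueness of embedded balls to standardize $\tilde{B}$ while remaining inside the thickened surface, together with the observation that a connected lift lands in one sheet; once those are in place the conclusion is immediate.
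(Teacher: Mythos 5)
Your proposal is correct, and your primary argument takes a genuinely different route from the paper's. The paper stays downstairs in $\mathbb{S}^3$: it uses splitness to ambient isotope $K$ onto a small disc disjoint from $\Sigma_J$, shrinks that disc and slides it back to a sub-disc $D'\subset\Sigma_J$, so the resulting knot $K'$ is in SSF with diagram $[K';\Sigma_J]$ supported in a planar piece of the fiber (hence visibly classical and equal to $K$), and then invokes Theorem~\ref{theorem_link_invar} to transfer the computation from $J\sqcup K'$ back to $J\sqcup K$ --- this is precisely the ``alternative'' route in your last paragraph. Your main route instead works upstairs in the cover: triviality of $\Pi_J$ over the simply connected ball $B$ forces any lift $\mathfrak{k}$ into a single sheet $\tilde{B}$, an embedded $3$-ball in $\Sigma_J\times\mathbb{R}$ carried o.p.-diffeomorphically onto $(B,K)$, and the disc theorem (uniqueness of embedded balls up to ambient isotopy) standardizes $\tilde{B}$ into $D\times\mathbb{R}$ for a disc $D\subset\Sigma_J$, whence $\kappa(\mathfrak{k})\leftrightharpoons K$. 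What your route buys: it bypasses Theorem~\ref{theorem_link_invar} entirely, and with it the uniqueness of minimal genus Seifert surfaces for fibered links on which that theorem rests; since the argument applies verbatim to every lift, it also reproves in this case that the associated virtual knot is invariant; and it generalizes beyond fibered link complements to any virtual cover $(\mathfrak{k}^{\Sigma\times\mathbb{R}},\Pi,K^N)$ with $K$ contained in an embedded ball in $\text{int}(N)$. What the paper's route buys: brevity, given that Theorem~\ref{theorem_link_invar} is already established, and no appeal to the disc theorem. Two points you should make explicit in a final write-up, though neither is a gap: (i) the disc-theorem isotopy can be taken compactly supported in the interior, so it fixes $\partial(\Sigma_J\times\mathbb{R})$ as the paper's equivalence convention for manifolds with boundary requires; and (ii) a knot inside a standardly embedded ball in $D\times\mathbb{R}\cong\mathbb{R}^3$ has the same classical type whether viewed in the ball or in $\mathbb{R}^3$, which is what lets you conclude the classical knot produced is $K$ itself, with the correct chirality coming from the orientation-preserving hypotheses.
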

\begin{proof} There is an ambient isotopy moving $K$ far away from $\Sigma_J$. Then $K$ can be drawn in SSF with respect to some embedded $2$-disc $D$ disjoint from $\Sigma_J$. This disc can be made arbitrarily small and moved back to a disc $D'$ on $\Sigma_J$. $K'$ is in SSF with respect to $\Sigma_J$ and clearly its invariant associated virtual $\upsilon' \leftrightharpoons K$. By Theorem 3, $\upsilon \leftrightharpoons K$.
\end{proof}

\textbf{Example:} The three component link $L$ on the right in Figure \ref{fig_hopf_link_virt_cover} is not split. Suppose it is split, so that $\mathbb{S}^3$ decomposes into $3$-balls $B_1 \cup B_2$ each containing a component of $L$. The Hopf link $J$ must have its components in the same ball, say $B_1$, since it has linking number one. The third component $K$ cannot be contained in $B_2$: $K$ is in SSF with respect to a fiber of the Hopf link with trivial invariant associated virtual knot and $K^{\mathbb{S}^3} \leftrightharpoons 4_1$ isn't trivial. \hfill $\square$

\begin{figure}[htb]
\centerline{
\fcolorbox{black}{white}{
\begin{tabular}{ccccc}
\def\svgwidth{1.25in}
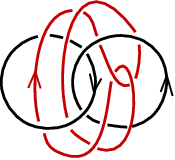 
& \hspace{.5cm} &
\def\svgwidth{2in}
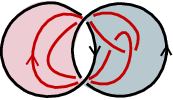 &\hspace{.5cm} & \def\svgwidth{1.5in}
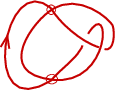
\end{tabular}
}
}
\caption{(Left) A non-split link $J \sqcup K$. (Middle) $K$ in SSF with respect to a Hopf link fiber. (Right) The invariant associated virtual knot is trivial.}\label{fig_hopf_link_virt_cover}
\end{figure}

\subsection{Non-Invertible Links} \label{sec_non} This section applies Theorem \ref{theorem_link_invar} to the detection of non-invertible links in $\mathbb{S}^3$. Denote as usual the change of orientation of all components of $L$ by $-L$. An oriented link $L$ is said to be \emph{invertible} if $L \leftrightharpoons -L$ (Note: the given ordering of the components is not changed). 
\newline
\newline
If $\mathfrak{k}$ is a knot diagram on a c.c.o surface, let $D_{\mathfrak{k}}$ denote any Gauss diagram of $\mathfrak{k}$. For any Gauss diagram $D$, let $\mathfrak{f}(D)$ denote the diagram obtained from $D$ by changing the direction of each arrow (but not the sign).

\begin{theorem} Let $L=J \sqcup K$ be an $m+1$ component oriented link, with $J$ an $m$ component fibered link, $\Sigma_J$ a fiber, and $K$ a knot in SSF with respect to $\Sigma_J$. If $L=J \sqcup K$ is invertible, then $\kappa(D_{[K;\Sigma_J]}) \leftrightharpoons -\kappa(\mathfrak{f}(D_{[K;\Sigma_J]}))$.
\end{theorem}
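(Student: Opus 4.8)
The plan is to apply the Main Theorem (Theorem \ref{theorem_link_invar}) to the pair $L_1=L$ and $L_2=-L$, which are equivalent precisely because $L$ is invertible. The entire content of the argument is therefore to identify the invariant associated virtual knot of $-L$ in terms of the Gauss diagram $D:=D_{[K;\Sigma_J]}$ of the original, and to show that it is $-\kappa(\mathfrak{f}(D))$.

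First I would set up a fiber and an SSF for $-L$. Reversing the orientation of a c.c.o Seifert surface reverses its induced boundary orientation, so $-\Sigma_J$ (the surface $\Sigma_J$ with reversed orientation) is a c.c.o Seifert surface for $-J$; fiberedness is a property of the complementary product structure on $N_J$ and is unaffected by reorienting the surface, so $-\Sigma_J$ is a fiber of $-J$. The geometric data of the SSF for $K$ (the balls $B_i$, together with the arcs and discs on the surface) are unchanged by reversing orientations, so $-K$ is in SSF relative to $-\Sigma_J$. By the construction recalled in Section \ref{sec_ssf}, $-L=-J\sqcup-K$ therefore has an invariant associated virtual knot, namely $\kappa([-K;-\Sigma_J])$.

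The key step is to compare the Gauss diagrams of $[-K;-\Sigma_J]$ and $[K;\Sigma_J]$ crossing by crossing. The over/undercrossing convention of Section \ref{sec_ssf} forces the normal vector $\vec{v}$ to satisfy ``the surface orientation together with $\vec{v}$ gives the right-handed orientation of $\mathbb{S}^3$''; reversing the surface orientation then forces the replacement $\vec{v}\mapsto-\vec{v}$, which interchanges the half-neighborhoods $\Sigma_J\times[0,1)$ and $\Sigma_J\times(-1,0]$. Consequently every overcrossing arc of the SSF becomes an undercrossing arc and vice versa, so each chord of the Gauss diagram has its arrow reversed. Reversing the orientation of $K$ reverses the orientation of the core circle. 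I would then verify that the crossing signs are unchanged: write $\epsilon=\mathrm{sign}\,\det_{\Sigma_J}(\vec{a}_{\mathrm{over}},\vec{b}_{\mathrm{under}})$; reorienting $K$ negates both tangent vectors (a factor $+1$ in the $2\times2$ determinant), reorienting the surface contributes a factor $-1$ to the orientation determinant, and swapping over/under transposes the two columns (another factor $-1$). These cancel in pairs, leaving $\epsilon$ fixed. Hence the Gauss diagram of $[-K;-\Sigma_J]$ is obtained from $D$ by reversing every arrow while keeping all signs, and reversing the circle orientation, which is exactly the Gauss diagram of $-\kappa(\mathfrak{f}(D))$.

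Combining these observations, the invariant associated virtual knot of $-L$ is $\kappa([-K;-\Sigma_J])\leftrightharpoons-\kappa(\mathfrak{f}(D_{[K;\Sigma_J]}))$, while that of $L$ is $\kappa(D_{[K;\Sigma_J]})$. Since $L\leftrightharpoons-L$, Theorem \ref{theorem_link_invar} yields $\kappa(D_{[K;\Sigma_J]})\leftrightharpoons-\kappa(\mathfrak{f}(D_{[K;\Sigma_J]}))$, as desired. I expect the main obstacle to be exactly the sign bookkeeping in the third step: the statement hinges on $\mathfrak{f}$ preserving signs rather than negating them, so the delicate point is to confirm that the simultaneous reversal of the knot orientation and of the surface orientation (with its forced normal flip) produces a net-zero change of crossing sign, which is what distinguishes this operation from the classical crossing switch that would also alter $\epsilon$.
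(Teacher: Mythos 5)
Your proposal is correct and follows essentially the same route as the paper's own proof: reverse the orientation of $\Sigma_J$ to obtain a fiber for $-J$, note that $-K$ is in SSF with respect to it, observe that the resulting Gauss diagram is $D_{[K;\Sigma_J]}$ with all arrows reversed and the circle orientation reversed but with signs unchanged, and then apply Theorem \ref{theorem_link_invar}. Your explicit determinant bookkeeping simply fills in the detail behind the paper's one-line assertion that ``this does not change the local orientation of the crossing.''
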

\begin{proof} Recall that the orientation of $\Sigma_J$ induces the given orientation on $\partial \Sigma_J=J$. A fiber $\Sigma_J'$ for $-J$ may be obtained by changing the orientation of $\Sigma_J$. This also changes the framing of $\Sigma_J$; just switch the blue and red sides. Clearly, $-K$ is in SSF with respect to $\Sigma_J'$. A Gauss diagram for $[-K;\Sigma_J']$ can be obtained from $D_{[K;\Sigma_J]}$ by changing the orientation of $D_{[K;\Sigma_J]}$ and switching each overcrossing arc in the SSF to an undercrossing arc (and vice versa). This does not change the local orientation of the crossing. Now apply Theorem \ref{theorem_link_invar}.
\end{proof}

\textbf{Example:} The link $L=J \sqcup K$ on the left in Figure \ref{figure_non_invert} is a non-split non-invertible link with invertible components. An evident Seifert surface $\Sigma_J$ in disc-band form is depicted. $J$ is fibered and invertible: it is a connect sum of $3_1$ (leftmost two bands), $5_1$ (next four bands) and $3_1$(rightmost two bands).  As a Murasugi sum of fibers of fibered knots, $\Sigma_J$ is a fiber. $K$ is invertible as it  a left handed trefoil (after some effort). A Gauss diagram $D$ of the invariant associated virtual knot $\upsilon$ is on the right of the figure. The (reduced) Sawollek polynomial \cite{saw} shows that $D \not \leftrightharpoons -\mathfrak{f}(D)$: 
\begin{eqnarray*}
\tilde{Z}_D(x,y) &=& -1+x^2-x^3+x^4+\frac{x^3}{y^3}+\frac{x^4}{y^3}-y+x y+xy^2-x^2y^2,\\
\tilde{Z}_{-\mathfrak{f}(D)}(x,y) &=& -1+x^2-x^3+x^4+\frac{x^3}{y^2}-\frac{x^4}{y^2}-\frac{x}{y}+\frac{x^2}{y}-y^3+xy^3.\\
\end{eqnarray*}
Hence, $L$ is not invertible. The Sawollek polynomial of a classical knot is $0$, so $\upsilon$ is non-classical and hence not equivalent to $K$. By Corollary \ref{cor_split}, $L$ is non-split. \hfill $\square$
\newline

\begin{figure}[htb]
\fcolorbox{black}{white}{
\begin{tabular}{cc}
\begin{tabular}{c}
\def\svgwidth{3.35in}
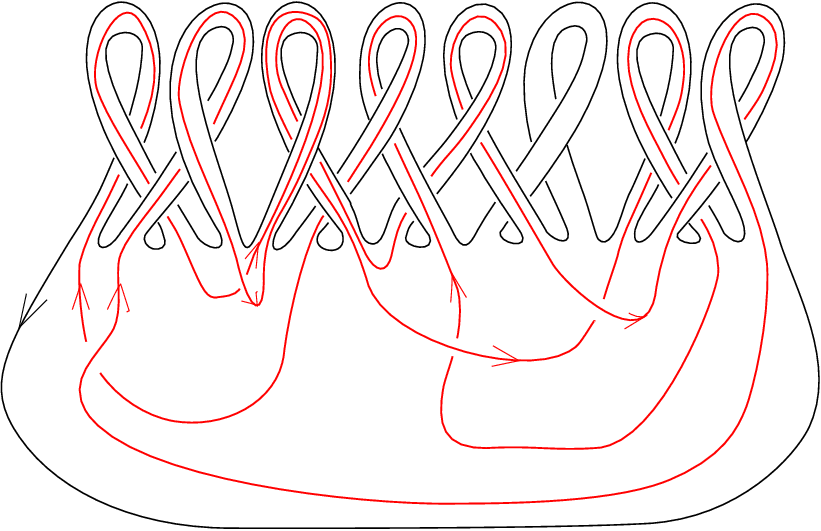 \end{tabular} &
\begin{tabular}{c}
\def\svgwidth{1.25in}
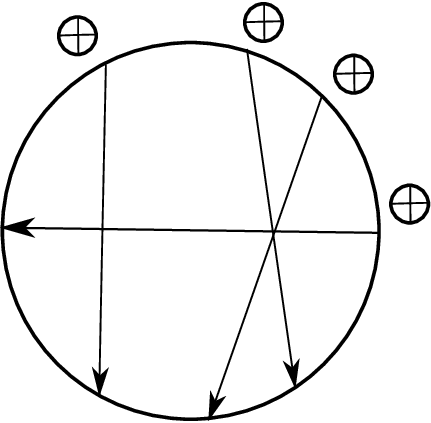 \end{tabular}
\end{tabular}}
\caption{(Left) A non-invertible non-split link with invertible components. (Right) The invariant associated virtual knot.}\label{figure_non_invert}
\end{figure}

The previous example can be generalized to any number of components. Whitten proved \cite{whitten_sublinks} that for every $m \ge 2$, there is an $m$ component non-invertible non-split link all of whose proper sublinks are invertible. This is the inspiration behind the following result.

\begin{corollary}\label{corollary_whitten_almost} For every $m \ge 2$, there are infinitely many $m$ component non-invertible non-split links $L_{m,i}$ all of whose components are invertible. Moreover, $L_{m,i}=J_{m,i,1} \sqcup \cdots \sqcup J_{m,i,m-1} \sqcup K_{m,i}$ may be chosen so that each proper sublink not containing $J_{m,i,1}$ is invertible. 
\end{corollary}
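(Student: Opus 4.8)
The plan is to promote the two--component example of Figure~\ref{figure_non_invert} to an $m$--component link by clasping $m-2$ auxiliary unknots onto its fibered component \emph{without disturbing} the associated virtual knot, and then to vary a connect--sum parameter to obtain infinitely many. I will write $A\sqcup K$ for that example, where $A=3_1\#5_1\#3_1$ is fibered and invertible with fiber $\Sigma_A$, $K$ is the invertible trefoil in SSF with respect to $\Sigma_A$, and $\upsilon=\kappa([K;\Sigma_A])$ is its invariant associated virtual knot, already shown there to satisfy $\kappa(D)\not\leftrightharpoons-\kappa(\mathfrak{f}(D))$ via the Sawollek polynomial.

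For the construction I would fix a sub--disc--band region of $\Sigma_A$ disjoint from the SSF of $K$ and confine all modifications to it. I would set $J_{m,i,1}=A\#(3_1)^{\#\,i}$ (trefoil summands inserted in that region), which stays fibered and invertible with Seifert genus $4+i$, and for $2\le j\le m-1$ take $J_{m,i,j}$ to be the free unknotted component of a connected sum $J_{m,i,1}\#\text{Hopf}$ clasped at a site in that region, so that $\text{lk}(J_{m,i,j},J_{m,i,1})=\pm1$ while $\text{lk}(J_{m,i,j},J_{m,i,j'})=0$ for distinct $j,j'\ge2$. With $K_{m,i}=K$ and $L_{m,i}=J_{m,i,1}\sqcup\cdots\sqcup J_{m,i,m-1}\sqcup K_{m,i}$, the calculus of Murasugi sums (Section~\ref{section_interlude}, \cite{gabai_nat}) makes $J=J_{m,i,1}\sqcup\cdots\sqcup J_{m,i,m-1}$ a fibered $(m-1)$--component link whose fiber $\Sigma_J$ is $\Sigma_A$ with the trefoil fibers and $m-2$ Hopf bands boundary--connect--summed on; since these sit away from the SSF of $K$, I would argue $[K;\Sigma_J]=[K;\Sigma_A]$, so that $K$ is in SSF with respect to $\Sigma_J$ with invariant associated virtual knot again $\upsilon$.

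The verifications I would then carry out are: non--invertibility, directly from the theorem preceding this corollary, since $D_{[K;\Sigma_J]}$ is the example's $D$ with $\kappa(D)\not\leftrightharpoons-\kappa(\mathfrak{f}(D))$; invertibility of every component (connect sums of trefoils and $5_1$, unknots, and a trefoil); and the moreover clause, by observing that deleting $J_{m,i,1}$ frees each $J_{m,i,j}$ into its own disjoint ball, exhibiting $\{J_{m,i,2},\dots,J_{m,i,m-1},K\}$ as a split union of invertible knots, whence every proper sublink missing $J_{m,i,1}$ is invertible. For non--splitness I would suppose a splitting sphere exists, use $\text{lk}(J_{m,i,j},J_{m,i,1})\ne0$ to force all of $J$ onto one side, reduce to a separation of $K$ from $J$, and reach a contradiction with Corollary~\ref{cor_split} (which would demand $\upsilon\leftrightharpoons K$, impossible since $\upsilon$ is non--classical). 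Infinitude follows because the first component $J_{m,i,1}$ has genus $4+i$, an ordered--link invariant, so the $L_{m,i}$ are pairwise inequivalent.

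The hard part will be the bookkeeping that keeps $\upsilon$ \emph{exactly} equal to the example's: I must verify that the trefoil summands and Hopf clasps really can be confined to a neighbourhood of $\partial\Sigma_A$ disjoint from the SSF of $K$, so that $\Sigma_J$ restricts to $\Sigma_A$ there and the single Sawollek computation may be reused verbatim. The other delicate point is that Corollary~\ref{cor_split} only forbids splitting $K$ off from all of $J$, so the non--split argument must first use the linking numbers to collapse an arbitrary splitting sphere to that one case before the virtual knot can be applied.
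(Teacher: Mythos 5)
Your proposal is correct and follows essentially the same route as the paper: Murasugi-summing trefoil fibers and Hopf bands onto the fiber of the two-component example (away from the SSF of $K$), reusing the same $K$ and the same Sawollek computation for non-invertibility, splitting off sublinks that miss $J_{m,i,1}$ as trefoil-plus-unlink, and ruling out splitness via linking numbers together with Corollary \ref{cor_split}. The only differences are cosmetic: your trefoil count is shifted by one relative to the paper's, and you make explicit (via the genus of $J_{m,i,1}$) the pairwise-distinctness of the $L_{m,i}$, which the paper leaves implicit.
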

\begin{proof} Let $\Sigma_{2,1}$ be the surface indicated in Figure \ref{figure_non_invert}. Construct a surface $\Sigma_{m,i}$ by taking Murasugi sums \cite{gabai_nat} with $i-1$ trefoil fibers $\Sigma_T$ and $m-2$ Hopf link fibers $\Sigma_H$. The desired operations are topologically boundary connect sums of surfaces, as indicated in Figure \ref{figure_whitten}.  $\Sigma_{m,i}$ is an oriented fiber of the oriented fibered link $\partial \Sigma_{m,i}$ (Stallings \cite{stallings}). The link $\partial \Sigma_{m,i}$ has $m-2$ unknotted components and one component that is a connect sum of $5_1$ and $i+1$ right handed trefoils. Set this last component to be $J_{m,i,1}$ and label the other components arbitrarily. Define $K_{m,i}$ to be the (red) knot $K$ from Figure \ref{figure_non_invert}.  
\newline
\newline
By construction, $K_{m,i}$ is in SSF with respect to $\Sigma_{m,i}$. Its invariant associated virtual knot is depicted on the right in Figure \ref{figure_non_invert}. Thus $L_{m,i}$ is non-invertible (see previous example). All of the components are invertible. Any proper sublink not containing $J_{m,i,1}$ is a split link that splits into a trefoil and an unlink. Such a link is invertible.
\newline
\newline
Lastly we must show that $L_{m,i}$ is non-split. If it is split, then $\mathbb{S}^3$ can be decomposed into two $3$-balls $B_1$ and $B_2$ each containing at least one component of $L_{m,i}$. Linking number considerations imply that all of the components $J_{j,i,k}$ must be in the same 3-ball, say $B_1$. But $K_{j,i}$ cannot be in $B_2$ since the associated virtual knot is not classical (by Corollary \ref{cor_split}). 
\end{proof} 

\begin{figure}[htb]
\fcolorbox{black}{white}{
\begin{tabular}{c}
\def\svgwidth{6in}
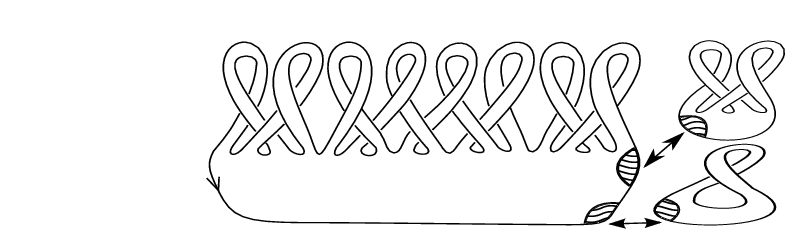
\end{tabular}}
\caption{Murasugi sums used in the proof of Corollary \ref{corollary_whitten_almost}.}\label{figure_whitten}
\end{figure}
  
\section{Virtual Covers and Virtually Fibered Links} \label{sec_virt_fib}

Previously links in $\mathbb{S}^3$ of the form $J \sqcup K$ where $J$ is fibered have been considered. Virtual covers can also be applied to links of the form $J \sqcup K$ where $J$ is not fibered. We will consider links where $N_J$ has a finite index cover by $N_{\tilde{J}}$, where $\tilde{J}$ is a fibered link. Such a link is an example of a \emph{virtually}\footnote{The double use of the word ``virtual'' is admittedly confusing, but unavoidable.} \emph{fibered link} \cite{walsh}. More generally, a virtually fibered $3$-manifold is one which has a finite index cover by a fibered $3$-manifold (Thurston \cite{thurston}). 
\newline
\begin{figure}[htb]
\[
\fcolorbox{black}{white}{
\xymatrix{
\begin{array}{c}
\def\svgwidth{2.5in}
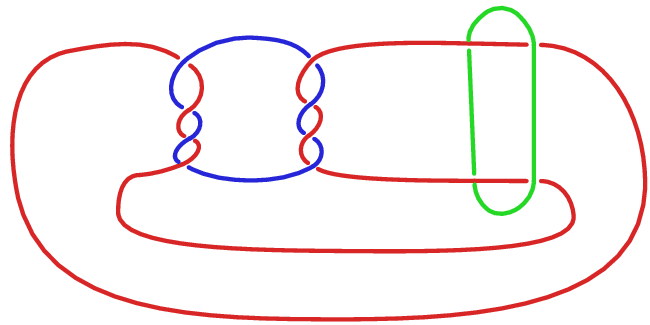 
\end{array}
\ar[r] &
\begin{array}{c}
\def\svgwidth{2in}
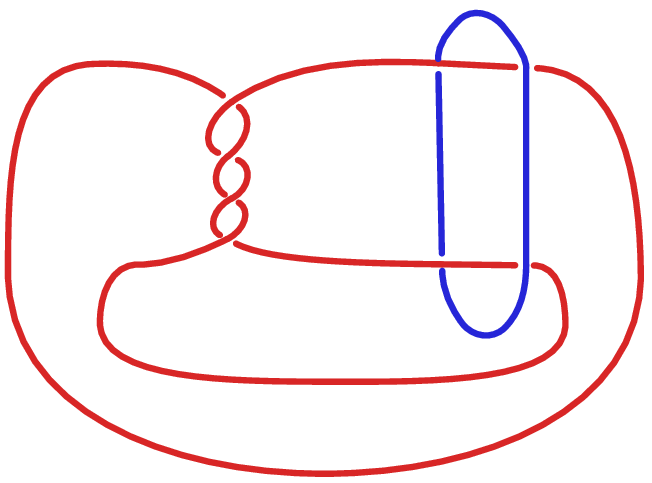
\end{array}
}
}
\]
\caption{The links $\tilde{J}$ (Left) and $J$ (Right) (drawn in SnapPy).}\label{fig_gabai_links}
\end{figure}

\textbf{Example:} We review Gabai's \cite{gabai_first} example of a non-fibered, virtually fibered hyperbolic link. Let $J$ be the two component link on the right hand side of Figure \ref{fig_gabai_links} and $\tilde{J}$ the three component link on the left hand side of Figure \ref{fig_gabai_links}. An index two covering projection $N_{\tilde{J}} \to N_J$ can be described as follows. A ``dented cube'' $C$ with a tangle removed is depicted at the bottom of Figure \ref{fig_gabai_cubes}. This is a fundamental region of an index two covering projection $C \sqcup C \to C$ (see Figure \ref{fig_gabai_cubes}). $N_J$ is constructed by identifying the golden annulus $Y$ and red annulus $R$ in Figure \ref{fig_gabai_cubes_identifications} and then identifying the blue $(B)$ and green $(G)$ twice punctured discs in Figure \ref{fig_gabai_cubes_identifications}.  The corresponding identifications on $C \sqcup C$ produce $N_{\tilde{J}}$. The calculus described in Section \ref{section_interlude} can be used to show that the surface $\Sigma_{\tilde{J}}$ in Figure \ref{fig_gabai_fiber}, after an ambient isotopy of $\tilde{J}$, is a fiber for $\tilde{J}$. By \cite{gabai_first}, $J$ is non-fibered and hyperbolic. \hfill $\square$

\begin{figure}[htb]
\[
\fcolorbox{black}{white}{
\xymatrix{
\begin{array}{c} 
\scalebox{.75}{\psfig{figure=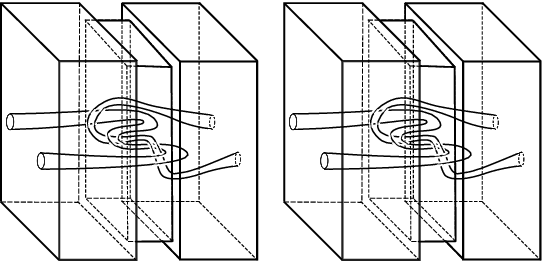}} \end{array} \ar[d] \\  
\begin{array}{c} 
\scalebox{.75}{\psfig{figure=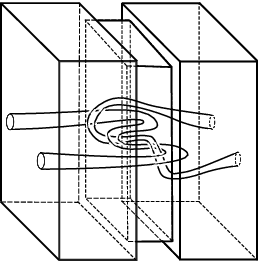}} \end{array}
}
}
\]
\caption{Constructing Gabai's double cover $N_{\tilde{J}}$ of $N_{J}$.} \label{fig_gabai_cubes}
\end{figure}

\begin{figure}[htb]
\[
\fcolorbox{black}{white}{
\xymatrix{
\begin{array}{c}
\def\svgwidth{1.25in}
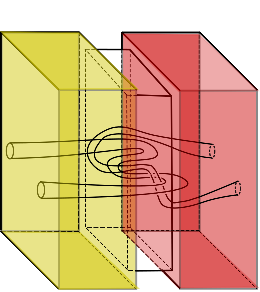
\end{array}
\ar[r] &
\begin{array}{c}
\def\svgwidth{1.25in}
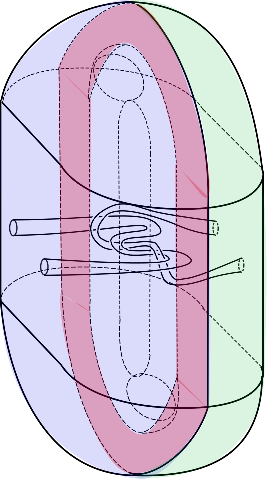
\end{array}
}
}
\]
\caption{The identifications of the bottom cube in Figure \ref{fig_gabai_cubes} to obtain $N_{J}$.} \label{fig_gabai_cubes_identifications}
\end{figure}

\begin{figure}
\begin{center}
\fcolorbox{black}{white}{
\scalebox{1}{\psfig{figure=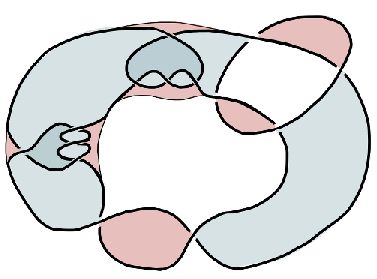}}}
\end{center}
\caption{Gabai's fiber for $N_{\tilde{J}}$.} \label{fig_gabai_fiber} 
\end{figure}

Let $L=J \sqcup K$ and suppose that $J$ is a virtually fibered link admitting a finite-index covering $p:N_{\tilde{J}} \to N_J$, where $\tilde{J}$ is a fibered link. Denote the index by $n$. Assume also that $p$ is regular and orientation preserving. Suppose that there is a knot $\tilde{K}$ in $N_{\tilde{J}}$ such that $p(\tilde{K})=K$ (as functions defined on $\mathbb{S}^1$). By regularity, there are $n$ such knots $\tilde{K}=\tilde{K}(1),\ldots,\tilde{K}(n)$. Suppose that each $\tilde{K}(i)$ is in SSF with respect to some fiber $\Sigma(i)$ of $\tilde{J}$. The \emph{associated virtual knot spectrum} of $K$ in $N_J$ is the formal sum:
\[
\sigma_K=\sum_{i=1}^n \kappa([\tilde{K}(i);\Sigma(i)]).
\]
We consider $\sigma_K$ as an element of $\mathbb{Z}[\mathscr{V}]$, the free abelian group generated by the equivalence classes of oriented virtual knots.

\begin{theorem} \label{theorem_spectrum} Let $K_1^{N_{J}}$, $K_2^{N_J}$ have associated virtual knot spectra $\sigma_1$, $\sigma_2$, respectively. If $K_1^{N_J} \leftrightharpoons K_2^{N_J}$, then $\sigma_1=\sigma_2$.
\end{theorem}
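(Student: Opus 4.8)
The plan is to lift the ambient isotopy realizing $K_1^{N_J} \leftrightharpoons K_2^{N_J}$ to the fibered cover $N_{\tilde{J}}$, where it will carry the lifts of $K_1$ onto the lifts of $K_2$, merely permuting the $n$ summands of the spectrum. Invariance of each summand under the lifted isotopy will then follow from the Main Theorem, so that $\sigma_1$ and $\sigma_2$ agree after reindexing.

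First I would construct the lift. By hypothesis there is an ambient isotopy $h_t:N_J\to N_J$ with $h_0=\mathrm{id}$, $h_1(K_1)=K_2$, acting as the identity on $\partial N_J$. Viewing $H(x,t)=h_t(x)$ as a homotopy and precomposing with $p\times\mathrm{id}$, the map $H\circ(p\times\mathrm{id}):N_{\tilde{J}}\times\mathbb{I}\to N_J$ is homotopic to its time-zero value $p$, so its induced map on $\pi_1$ equals $p_{*}$; the lifting criterion is therefore satisfied, and the covering homotopy theorem produces a lift $\tilde{h}_t:N_{\tilde{J}}\to N_{\tilde{J}}$ with $\tilde{h}_0=\mathrm{id}$ and $p\circ\tilde{h}_t=h_t\circ p$. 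Standard covering-space arguments show each $\tilde{h}_t$ is a diffeomorphism, so $\tilde{h}_t$ is an ambient isotopy of $N_{\tilde{J}}$. For the boundary condition, for $b\in\partial N_{\tilde{J}}$ the path $t\mapsto\tilde{h}_t(b)$ projects to the constant path $h_t(p(b))=p(b)$ and hence lies in the discrete fiber $p^{-1}(p(b))$; starting at $b$, it is constant, so $\tilde{h}_t$ fixes $\partial N_{\tilde{J}}$ pointwise. I expect this lifting step, together with the verification that the lift is again an honest boundary-fixing ambient isotopy, to be the main point; the remainder is bookkeeping.

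Next I would track the lifts. Since $p(\tilde{h}_1(\tilde{K}_1(i)))=h_1(p(\tilde{K}_1(i)))=h_1(K_1)=K_2$, the circle $\tilde{h}_1(\tilde{K}_1(i))$ is a component of $p^{-1}(K_2)$ mapping homeomorphically onto $K_2$, hence equals one of the $n$ lifts $\tilde{K}_2(j)$. As $\tilde{h}_1$ is a diffeomorphism it induces a permutation $\pi\in S_n$ with $\tilde{h}_1(\tilde{K}_1(i))=\tilde{K}_2(\pi(i))$, and because $p$ and $h_1$ are orientation preserving these agree as oriented knots. The isotopy $\tilde{h}_t$ then realizes $\tilde{K}_1(i)\leftrightharpoons\tilde{K}_2(\pi(i))$ by an ambient isotopy of $N_{\tilde{J}}$ fixing $\partial N_{\tilde{J}}$.

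Finally I would upgrade each such equivalence to $\mathbb{S}^3$ and invoke the Main Theorem. Extending the $N_{\tilde{J}}$-isotopy by the identity over $V(\tilde{J})$ (isotopy extension, \cite{kosinski}) gives an ambient isotopy of $\mathbb{S}^3$ fixing $\tilde{J}$ pointwise, so $\tilde{J}\sqcup\tilde{K}_1(i)\leftrightharpoons\tilde{J}\sqcup\tilde{K}_2(\pi(i))$ as oriented links in $\mathbb{S}^3$. Since $\tilde{J}$ is fibered and $\tilde{K}_1(i)$, $\tilde{K}_2(\pi(i))$ are in SSF with respect to the fibers $\Sigma_1(i)$, $\Sigma_2(\pi(i))$, Theorem \ref{theorem_link_invar} yields $\kappa([\tilde{K}_1(i);\Sigma_1(i)])\leftrightharpoons\kappa([\tilde{K}_2(\pi(i));\Sigma_2(\pi(i))])$; note that passing to $\mathbb{S}^3$ and using Theorem \ref{theorem_link_invar} (rather than Lemma \ref{lemma_cm_general} directly) neatly sidesteps the a priori dependence of each summand on the chosen fiber. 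Summing over $i$ and reindexing by the permutation $\pi$ gives $\sigma_1=\sigma_2$ in $\mathbb{Z}[\mathscr{V}]$.
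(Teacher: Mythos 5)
Your proof is correct and follows essentially the same route as the paper's: lift the isotopy to $N_{\tilde{J}}$, observe that it induces a permutation of the $n$ lifts, apply Theorem \ref{theorem_link_invar} to each pair $\tilde{K}_1(i)$, $\tilde{K}_2(\pi(i))$ after pushing the equivalence into $\mathbb{S}^3$, and sum. The only difference is technical: the paper lifts the isotopy restricted to the knot maps $K_i:\mathbb{S}^1 \to N_J$ and then re-ambientizes it in $N_{\tilde{J}}$ via the isotopy extension theorem, whereas you lift the ambient isotopy of $N_J$ itself (via the lifting criterion), which makes ambientness and the boundary condition upstairs automatic.
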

\begin{proof} View the ambient isotopy as an isotopy of the functions $K_i:\mathbb{S}^1 \to N_J$. The isotopy lifts to $N_{\tilde{J}}$ and connects some lift of $K_1$ to some lift of $K_2$. Again applying the isotopy extension theorem we see that these are equivalences of knots in $N_{\tilde{J}}$. Say $\tilde{K}_1(i) \leftrightharpoons \tilde{K}_2(j_i)$. Apply this to all lifts of $K_1$ to get a permutation of $1,\ldots,n$ defined by  $i \to j_i$.
\newline
\newline
Let $\tilde{L}_k(i)=\tilde{J} \sqcup \tilde{K}_k(i)$. By the above, we have an equivalence $\tilde{L}_1(i) \leftrightharpoons \tilde{L}_2(j_i)$. By hypothesis, each $\tilde{K}_k(i)$ is in SSF with respect to some fiber $\Sigma_k(i)$. By Theorem \ref{theorem_link_invar}, it follows that $\kappa([\tilde{K}_1(i);\Sigma_1(i)]) \leftrightharpoons \kappa([\tilde{K}_2(j_i);\Sigma_2(j_i)])$. Hence $\sigma_1=\sigma_2$. 
\end{proof}

%\begin{remark} It is not required that the knots be in SSF with respect to the same Seifert surface. It is only required that each is in SSF with respect to the fiber of some fibration for $\tilde{J}$, as any two such Seifert surfaces are unique up to isotopy (see \cite{kobayashi_minimal}).\end{remark}

\textbf{Example:} Let $J$, $\tilde{J}$ be as in Gabai's example. The covering $p:N_{\tilde{J}} \to N_J$ is regular because it has index 2. Clearly $p$ preserves orientation. Let $K$ be the (red) knot drawn in the detail of the middle panel of Figure \ref{fig_gabai_non_invert} oriented arbitrarily. We will show $K$ is non-invertible in $N_J$. $K$ has two lifts $\tilde{K}(1)$ and $\tilde{K}(2)$. $\tilde{K}(1)$ in SSF with respect to Gabai's fiber $\Sigma_{\tilde{J}}$ (top panel). The invariant associated virtual knot $\upsilon(1)=\kappa([\tilde{K}(1);\Sigma_J])$ is in the bottom left panel. This is equivalent to the non-invertible classical knot $8_{17}$, oriented as $\vec{8}_{17}$. The knot $\tilde{K}(2)$ can also be put into special SSF with respect to $\Sigma_J$ (after some effort). The associated virtual knot $\upsilon(2)$ is also $\vec{8}_{17}$. Hence $\sigma_K=2\cdot \vec{8}_{17}$.  But $\sigma_{-K}=2 \cdot (-\vec{8}_{17})$. Hence $K \not \leftrightharpoons -K$ in $N_J$. Note however that $K$ is a trefoil in $\mathbb{S}^3$, and hence is invertible in $\mathbb{S}^3$ (bottom right panel). \hfill $\square$

\begin{figure}[htb]
\fcolorbox{black}{white}{
\begin{tabular}{c} \\ \\
\begin{tabular}{c}
\fcolorbox{black}{white}{
\def\svgwidth{4.7in}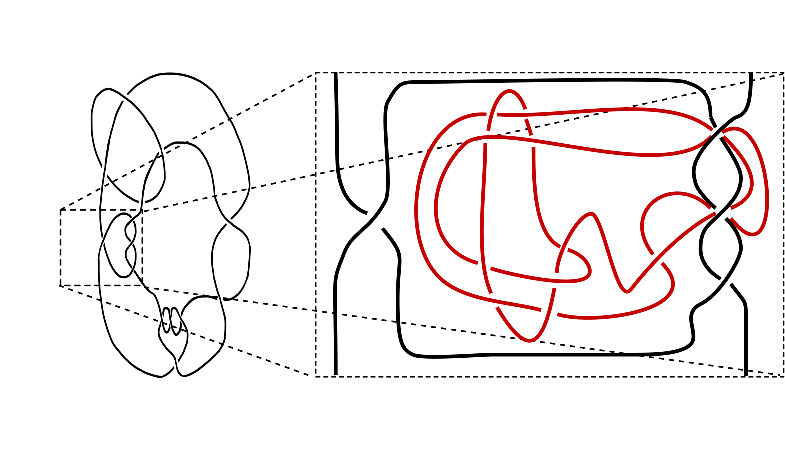} \\ \\
\fcolorbox{black}{white}{
\def\svgwidth{4.7in}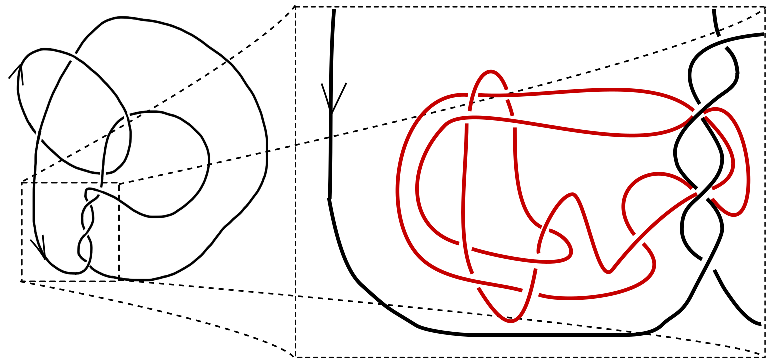}
\end{tabular} \\ \\
\begin{tabular}{ccc}
\fcolorbox{black}{white}{\def\svgwidth{2.3in}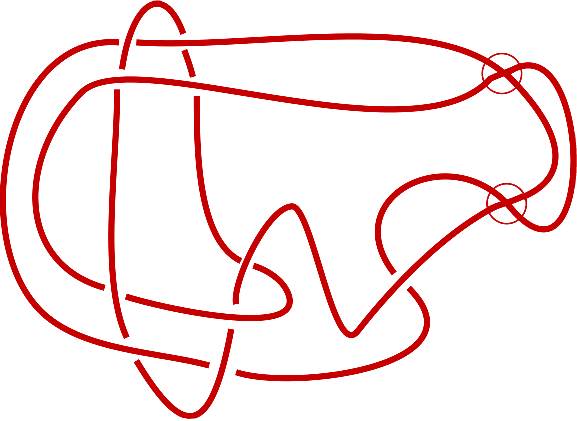} & & \fcolorbox{black}{white}{\def\svgwidth{2.3in}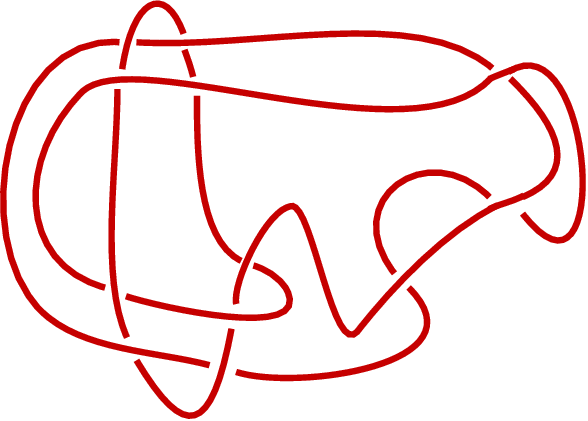}
\end{tabular} \\ \\
\end{tabular}
}
\caption{The knot $K$ in $N_J$ (middle, red), the knot $\tilde{K}(1)$ in $N_{\tilde{J}}$ (top, red), associated virtual knot $\upsilon(1) \leftrightharpoons 8_{17}$ (bottom left), and $K$ depicted as a left handed trefoil in $\mathbb{S}^3$ (bottom right).}\label{fig_gabai_non_invert}
\end{figure}

\subsection{Thanks} To A. Kaestner, K. Orr, and R. Todd, for encouragement. 

\bibliographystyle{plain}
\bibliography{bib_fiber}

\begin{thebibliography}{10}

\bibitem{bz}
G.~Burde and H.~Zieschang.
\newblock {\em Knots}, volume~5 of {\em de Gruyter Studies in Mathematics}.
\newblock Walter de Gruyter \& Co., Berlin, second edition, 2003.

\bibitem{vc_2}
M.~Chrisman and A.~Kaestner.
\newblock Virtual covers of links {II}.
\newblock to appear in {\it E. Flapan, A. Henrich, A. Kaestner, and S. Nelson (Eds.), Knot Theoretic Structures \& Spatial Graphs, American Mathematical Society, Providence, RI,} (2017), arXiv:1512.02667[math.GT].

\bibitem{cm_fiber}
M.~W. Chrisman and V.~O. Manturov.
\newblock Fibered knots and virtual knots.
\newblock {\em J. Knot Theory Ramifications}, 22(12):1341003, 23, 2013.

\bibitem{dk}
S.~V. Duzhin and M.~V. Karev.
\newblock Determination of the orientation of string links using finite-type
  invariants.
\newblock {\em Funktsional. Anal. i Prilozhen.}, 41(3):48--59, 96, 2007.

\bibitem{futer}
D.~Futer.
\newblock Fiber detection for state surfaces.
\newblock {\em Algebr. Geom. Topol.}, 13(5):2799--2807, 2013.

\bibitem{gabai_nat}
D.~Gabai.
\newblock The {M}urasugi sum is a natural geometric operation.
\newblock In {\em Low-dimensional topology ({S}an {F}rancisco, {C}alif.,
  1981)}, vol. 20 of Contemp. Math., pages 131--143. Amer. Math. Soc.,
  Providence, RI, 1983.

\bibitem{gabai_first}
D.~Gabai.
\newblock On {$3$}-manifolds finitely covered by surface bundles.
\newblock In {\em Low-dimensional topology and {K}leinian groups
  ({C}oventry/{D}urham, 1984)}, volume 112 of {\em London Math. Soc. Lecture
  Note Ser.}, pages 145--155. Cambridge Univ. Press, Cambridge, 1986.

\bibitem{GPV}
M.~Goussarov, M.~Polyak, and O.~Viro.
\newblock Finite-type invariants of classical and virtual knots.
\newblock {\em Topology}, 39(5):1045--1068, 2000.

\bibitem{hartley}
R.~Hartley.
\newblock Identifying noninvertible knots.
\newblock {\em Topology}, 22(2):137--145, 1983.

\bibitem{kamkam}
N.~Kamada and S.~Kamada.
\newblock Abstract link diagrams and virtual knots.
\newblock {\em Journal of Knot Theory and its Ramifications}, 9:93--106, 2000.

\bibitem{on_knots}
L.~H. Kauffman.
\newblock {\em On knots}, volume 115 of {\em Annals of Mathematics Studies}.
\newblock Princeton University Press, Princeton, NJ, 1987.

\bibitem{KaV}
L.~H. Kauffman.
\newblock Virtual knot theory.
\newblock {\em European J. Combin.}, 20(7):663--690, 1999.

\bibitem{kawauchi}
A.~Kawauchi.
\newblock {\em A survey of knot theory}.
\newblock Birkh\"auser Verlag, Basel, 1996.
\newblock Translated and revised from the 1990 Japanese original by the author.

\bibitem{kobayashi_minimal}
T.~Kobayashi.
\newblock Uniqueness of minimal genus {S}eifert surfaces for links.
\newblock {\em Topology Appl.}, 33(3):265--279, 1989.

\bibitem{kosinski}
A.~A. Kosinski.
\newblock {\em Differential manifolds}, volume 138 of {\em Pure and Applied
  Mathematics}.
\newblock Academic Press, Inc., Boston, MA, 1993.

\bibitem{km_fiber2}
V.~A. Krasnov and V.~O. Manturov.
\newblock Graph-valued invariants of virtual and classical links and minimality
  problem.
\newblock {\em J. Knot Theory Ramifications}, 22(12):1341006, 14, 2013.

\bibitem{kuperberg}
G.~Kuperberg.
\newblock What is a virtual link?
\newblock {\em Algebraic and Geometric Topology}, 3:587--591, 2003.

\bibitem{saw}
J.~{Sawollek}.
\newblock {On Alexander-Conway Polynomials for Virtual Knots and Links}.
\newblock {\em arXiv:9912173[math.GT]}, December 1999.

\bibitem{stallings}
J.~R. Stallings.
\newblock Constructions of fibred knots and links.
\newblock In {\em Algebraic and geometric topology ({P}roc. {S}ympos. {P}ure
  {M}ath., {S}tanford {U}niv., {S}tanford, {C}alif., 1976), {P}art 2}, Proc.
  Sympos. Pure Math., XXXII, pages 55--60. Amer. Math. Soc., Providence, R.I.,
  1978.

\bibitem{thurston}
W.~P. Thurston.
\newblock Three-dimensional manifolds, {K}leinian groups and hyperbolic
  geometry.
\newblock {\em Bull. Amer. Math. Soc. (N.S.)}, 6(3):357--381, 1982.

\bibitem{walsh}
G.~S. Walsh.
\newblock Great circle links and virtually fibered knots.
\newblock {\em Topology}, 44(5):947--958, 2005.

\bibitem{whitten}
W.~Whitten.
\newblock Isotopy types of knot spanning surfaces.
\newblock {\em Topology}, 12:373--380, 1973.

\bibitem{whitten_sublinks}
W.~C. Whitten, Jr.
\newblock On noninvertible links with invertible proper sublinks.
\newblock {\em Proc. Amer. Math. Soc.}, 26:341--346, 1970.

\end{thebibliography}

\end{document}